\newtheorem{theorem}{Theorem}
\newtheorem{lemma}{Lemma}
\newtheorem{proposition}{Proposition}
\theoremstyle{definition}
\newtheorem{remark}{Remark}
\newtheorem{definition}{Definition}
\newcommand{\Lb} {{\mathbb L}}
\newcommand{\Nb} {{\mathbb N}}
\newcommand{\Rb} {{\mathbb R}}
\newcommand{\Cs} {{\mathcal C}}
\newcommand{\Fs} {{\mathcal F}}
\newcommand{\Ps} {{\mathcal P}}
\newcommand{\ep}{\varepsilon}
\renewcommand{\phi}{\varphi}
\newcommand{\con}{\triangleleft}
\newcommand{\pn}{(P_{\ell})_{\ell\geq1}}
\begin{document}

\begin{frontmatter}
\pretitle{Research Article}

\title{Asymptotic arbitrage in fractional mixed markets}

\author[a]{\inits{F.}\fnms{Fernando}~\snm{Cordero}\thanksref{cor1}\ead[label=e1]{fcordero@techfak.uni-bielefeld.de}} %
\thankstext[type=corresp,id=cor1]{Corresponding author.}
\author[b]{\inits{I.}\fnms{Irene}~\snm{Klein}\ead[label=e2]{irene.klein@univie.ac.at}}
\author[c]{\inits{L.}\fnms{Lavinia}~\snm{Perez-Ostafe}\ead[label=e3]{lavinia.ostafe@univie.ac.at}}
\address[a]{Faculty of Technology,
\institution{University of Bielefeld},
Universit\"{a}tsstr. 25, 33615 Bielefeld, \cny{Germany}}
\address[b]{Department of Statistics and Operations Research,
\institution{University of Vienna},
Oskar-Morgenstern-Platz 1, 1090 Vienna, \cny{Austria}}
\address[c]{Department of Mathematics,
\institution{University of Vienna}, Oskar-Morgenstern-Platz 1, 1090 Vienna,
\cny{Austria}}



\markboth{F. Cordero et al.}{Asymptotic arbitrage in fractional mixed markets}

\begin{abstract}
We consider a family of mixed processes given as the sum of a
fractional Brownian motion with Hurst parameter $H\in(3/4,1)$ and a
multiple of an independent standard Brownian motion, the family being
indexed by the scaling factor\querymark{Q1} in front of the Brownian motion. We
analyze the underlying markets with methods from large financial
markets. More precisely, we show the existence of a strong asymptotic
arbitrage (defined as in \xch{Kabanov and Kramkov [Finance Stoch. {2}({2}), {143}--{172} ({1998})]}{\cite{Kakra}}) when the scaling factor
converges to zero. We apply a result
of \xch{Kabanov and Kramkov [Finance Stoch. {2}({2}), {143}--{172} ({1998})]}{\cite{Kakra}} that
characterizes the notion of strong asymptotic arbitrage in terms of the entire asymptotic
separation of two sequences of probability measures. The main part of
the paper consists of proving the entire separation and is based on a
dichotomy result for sequences of Gaussian measures and the concept of
relative entropy.
\end{abstract}
\begin{keywords}
\kwd{Mixed fractional Brownian motion}
\kwd{relative entropy}
\kwd{large financial market}
\kwd{entire asymptotic separation}
\kwd{strong asymptotic arbitrage}
\end{keywords}
\begin{keywords}[MSC2010]%
\kwd{60G22}\kwd{60G15}\kwd{91B24}\kwd{91B26}
\end{keywords}

\received{\sday{18} \smonth{5} \syear{2018}}
\revised{\sday{12} \smonth{7} \syear{2018}}
\accepted{\sday{12} \smonth{7} \syear{2018}}
\publishedonline{\sday{20} \smonth{8} \syear{2018}}
\end{frontmatter}
\setcounter{footnote}{0}

\section{Introduction}
Empirical studies of financial time series led to the conclusion that
the log-return increments exhibit long-range dependence. This fact
supports the idea of modelling the randomness of a risky asset using a
fractional Brownian motion with Hurst parameter $H>1/2$. However,
markets driven by a fractional Brownian motion have been extensively
disputed, as this motion fails to be a semimartingale and, hence, they
allow for a free lunch with vanishing risk (see \cite{G:R:S:2008}).

Many attempts were proposed to overcome this drawback\querymark{Q2} of the fractional
Brownian motion. In this work, we deal with the regularization method
proposed by Cheridito in \cite{Ch,Ch1} when $H>3/4$. This method
consists in adding to the fractional Brownian motion a multiple of an
independent Brownian motion, the resulting process, called \textit
{mixed fractional Brownian motion}, being Gaussian with the long-range
dependence property. Moreover, as shown in \cite{Ch,Ch1}, when $H>3/4$
the mixed fractional Brownian motion is equivalent to a multiple of a
Brownian motion. Therefore, a Black--Scholes type model in which the
randomness of the risky asset is driven by a mixed fractional Brownian
motion is arbitrage free and complete. We call such a model a \textit
{mixed fractional Black--Scholes model}.

On one hand the fractional Black--Scholes model admits arbitrage. On the
other hand, when the Hurst parameter $H>3/4$, adding a Brownian
component (in the above explained way) makes the arbitrage disappear.
In this paper we aim to go a step further and study the sensitivity to
arbitrage of the mixed fractional Black--Scholes model when the Brownian
component asymptotically vanishes. In \cite{CP,CP2} it was argued
that a good way of seeing the sensitivity to arbitrage of a market when
one of its parameters converges to zero (or infinity), is to consider
the family of markets indexed by the corresponding parameter and to use
methods from large financial markets. To be precise, we study the
asymptotic arbitrage opportunities in the sequence of mixed fractional
Black--Scholes models when the scaling factor in front of the Brownian
motion converges to zero. We focus on the notion of \textit{strong
asymptotic arbitrage} (SAA) introduced by Kabanov and Kramkov in \cite
{Kakra} as the possibility of getting arbitrarily rich with probability
arbitrarily close to one by taking a vanishing risk. Our model fits the
standard framework of large financial markets, as each mixed fractional
Black--Scholes model is arbitrage free (and even complete). We point out
that the existence of arbitrage in the limiting market does not
directly imply the existence of any kind of asymptotic arbitrage in the
approximating sequence of mixed markets. In \cite{Kakra} the existence
of strong asymptotic arbitrage was shown to be equivalent to the entire
asymptotic separation of the sequence of objective probability measures
and the sequence of equivalent martingale measures.

In order to show the existence of strong asymptotic arbitrage in the
sequence of mixed fractional Black--Scholes models we use the result of
\cite{Kakra} that was mentioned above. That means we show that the
sequence of objective probability measures is entirely asymptotically
separable from the sequence of equivalent martingale measures. Our main
contribution is the proof of this entire asymptotic separability in the
given model. We use the notion of relative entropy and a dichotomy
result for sequences of Gaussian measures. Indeed, inspired by the work
of Cheridito \cite{Ch,Ch1}, we first show, for each fixed market, that
the entropy of the objective probability measure relative to the
equivalent martingale measure, both restricted to a discrete partition,
converges to infinity. Our proof then follows using tightness arguments
for the sequence of Radon--Nikodym derivatives of the objective
probability measures with respect to equivalent martingale measures and
the fact that two sequences of Gaussian measures are either mutually
contiguous or entirely separable. The latter is known in the literature
as the equivalence/singularity dichotomy for sequences of Gaussian
processes, see \cite{Eag81}.

The paper is structured as follows. In Section~\ref{S1}, we set the
mixed fractional Black--Scholes model and recall the framework of the
large financial market. At the end of this part, we state the main
result (Theorem~\ref{t1}). Section \ref{S2} is dedicated to the proof
of Theorem~\ref{t1}, whereas Section \ref{S3} provides a discussion
about the existence of strong asymptotic arbitrage using self-financing
strategies constrained to jump only in a finite set of times. We end
our work with Appendix~\ref{A1} in which we recall the definition of
relative entropy and an equivalent characterization in terms of the
Radon--Nikodym derivative.
\section{Preliminaries and main results}\label{S1}

\subsection{Setting the model}\label{S1.1}
Let $(\varOmega, \Fs, P)$ be a probability space.

\begin{definition}
A fractional Brownian motion $Z^H=(Z^H_t)_{t\geq0}$ with Hurst
parameter $H\in(0,1)$ is a continuous centred Gaussian process with
covariance function
\[
\mathrm{Cov}\bigl(Z^H_t,Z^H_s
\bigr)=E\bigl(Z^H_tZ^H_s\bigr)=
\frac{1}2 \bigl(t^{2H}+s^{2H}-|t-s|^{2H}
\bigr),\quad  s,t\geq0.
\]
In particular, $Z^{\frac{1}2}$ is a standard Brownian motion.
\end{definition}
A linear combination of different fractional Brownian motions is
refered in the literature as a \textit{mixed fractional Brownian
motion}. In order to avoid localization arguments we only consider
finite time horizon processes. In addition we focus on linear
combinations of a standard Brownian motion $(B_t)_{t\in[0,1]}$ and an
independent fractional Brownian motion $(Z_t^H)_{t\in[0,1]}$ with
Hurst parameter $H\in(3/4,1)$, both defined on $(\varOmega, \Fs, P)$.
Cheridito shows in \cite{Ch,Ch1} that, for each $\alpha\in\Rb$ the
mixed process $M^{H,\alpha}:=(M_t^{H,\alpha})_{t\in[0,1]}$ defined
by
\[
M_t^{H,\alpha}:=\alpha\, Z_t^H+
B_t,\quad t\in[0,1],
\]
is equivalent to a Brownian motion. By this we mean that the measure
$Q^{H,\alpha}$ induced on $\Cs[0,1]$ by $M^{H,\alpha}$ and the
Wiener measure $Q_W$ (induced by the Brownian motion on $\Cs[0,1]$)
are equivalent. As a consequence, the process $M^{H,\alpha}$ is a
$(\Fs_t^{H,\alpha})_{t\in[0,1]}$-semimartingale, where, for each
$t\geq0$, $\Fs_t^{H,\alpha}:=\overline{\sigma((M_s^{H,\alpha
})_{s\in[0,t]})}$ is the right-continuous natural filtration augmented
by the nullsets.

Now, for each $\alpha>0$, we call by the \textit{$\alpha$-mixed
fractional Black--Scholes model} the financial market consisting of a
risk free asset normalized to one and a risky asset $(S_t^{H,\alpha
})_{t\in[0,1]}$ given by
%
\begin{equation}
\label{smm} S_t^{H,\alpha}:=S_0^{H,\alpha} \exp
\biggl( \biggl(\mu-\frac{\sigma
^2}{2\alpha^2} \biggr)\, t+\sigma \biggl(Z_t^H+
\frac{1}{\alpha} B_t \biggr) \biggr),\quad t\in[0,1],
\end{equation}
where $\mu\in\Rb$ and $\sigma>0$ represent the drift and the
volatility of the asset.\footnote{$S^{H,\alpha}$ is the solution of
$dS_t^{H,\alpha}=\mu\,S_t^{H,\alpha}\,dt+\sigma\, S_t^{H,\alpha}\,
d(Z^H+\frac{1}{\alpha} B)_t$.} We denote by $X:=(X_{t})_{t\in[0,1]}$
the coordinate process in $\Cs[0,1]$ and we define the process
$S^\alpha:=(S^\alpha_t)_{t\in[0,1]}$ as
%
\begin{equation}
\label{sm} S^\alpha_t:=S^\alpha_0
\exp \biggl( \biggl(\mu-\frac{\sigma
^2}{2\alpha^2} \biggr)\, t+\frac{\sigma}{\alpha}
X_t \biggr),\quad t\in[0,1].
\end{equation}
From the above discussion, we conclude that $S^{H,\alpha}$ under $P$
is equivalent to $S^\alpha$ under $Q_W$, which is a martingale when
$\mu=0$. For a general drift, we denote by $Q_{\frac{\mu\alpha
}{\sigma}}$ the measure induced on $\Cs[0,1]$ by the Brownian motion
with drift $-\frac{\mu\alpha}{\sigma}$ (in particular $Q_0=Q_W$).
Thanks to the Girsanov theorem, the process $S^{H,\alpha}$ under $P$
is also equivalent to $S^\alpha$ under $Q_{\frac{\mu\alpha}{\sigma
}}$, which is a martingale. Therefore, the $\alpha$-mixed fractional
Black--Scholes model with the filtration $(\Fs_t^{H,\alpha})_{t\in
[0,1]}$ has a unique equivalent martingale measure, and therefore is
arbitrage-free and complete.
\subsection{Asymptotic arbitrage}
In this work, we treat the collection of $\alpha$-mixed fractional
Black--Scholes models with methods from large financial markets. This
idea is formalized in the following definition.
%
\begin{definition}[The large mixed fractional market]
We call by \textit{large mixed fractional market} the family of
$\alpha$-mixed fractional Black--Scholes models, $\alpha>0$, i.e. the
family of markets
\[
\Lb^H:= \bigl(\varOmega,\Fs,\bigl(\Fs_t^{H,\alpha}
\bigr)_{t\in
[0,1]},P,S^{H,\alpha} \bigr)_{\alpha>0}.
\]
\end{definition}
We aim to study the presence of asymptotic arbitrage in the large
financial mixed fractional market when $\alpha$ tends to infinity,\querymark{Q3}
i.e. when the Brownian component asymptotically disappears. More
precisely, we intend to investigate, using methods of \cite{Kakra},
the presence of a so-called \textit{strong asymptotic arbitrage}. The
latter is an analogue concept of arbitrage but for sequences of markets
rather than for a single market model. Intuitively, this kind of
arbitrage for sequences of markets gives the possibility of getting
arbitrarily rich with probability arbitrarily close to one while taking
a vanishing risk. In order to make this idea precise we first specify
the set of admissible trading strategies.
%
\begin{definition}[Admissible trading strategy]\label{SFAS}
A \textit{trading strategy} for $S^{H,\alpha}$ is a real-valued
predictable $S^{H,\alpha}$-integrable stochastic process $\varPhi:=(\varPhi
_t)_{t\in[0,1]}$. The trading strategy is said to be \textit
{admissible} if there is $m\in\Rb_+$ such that for all $t\in[0,1]$:
$(\varPhi\cdot S^{H,\alpha})_t\geq-m$ almost surely.
\end{definition}
Now we proceed to recall the definition of strong asymptotic arbitrage of \cite{Kakra}.
%
\begin{definition}\label{SAA}
A strong asymptotic arbitrage (SAA) is said to exist in the large mixed
fractional market as $\alpha$ tends to infinity if there exists a
sequence $(\alpha_\ell)_{\ell\geq1}$ converging to infinity and a
sequence $(\varPhi_\ell)_{\ell\geq1}$, where $\varPhi_\ell$ is an
admissible trading strategy for $S^{H,\alpha_\ell}$, such that
\begin{enumerate}
\item$(\varPhi_\ell\cdot S^{H,\alpha_\ell})_t\geq-m_\ell,\quad$
$t\in[0,1]$, $\ell\geq1$,
\item$\lim_{\ell\to\infty}P ((\varPhi_\ell\cdot S^{H,\alpha
_\ell} )_1\geq M_\ell)=1$,
\end{enumerate}
where $m_k$ and $M_k$ are sequences of positive real numbers converging
to zero and to infinity, respectively.
\end{definition}

\begin{remark}
This definition is equivalent to the notion of strong asymptotic
arbitrage of the first kind as given in \cite{Kakra}. This is
trivially seen by taking $V^\ell_0(\varPhi_\ell)=\frac{m_\ell
}{M_\ell}$ and $V^\ell_t(\varPhi_\ell)=\frac{m_\ell}{M_\ell}+\frac{1}{M_\ell}(\varPhi_\ell\cdot S^{H,\alpha_\ell})_t$ that
the SAA1 of \cite{Kakra} can be obtained from our SAA. It is equally trivial to get a SAA from the SAA1.
The notion is further equivalent to the strong asymptotic arbitrage of
the second kind from \cite{Kakra} as it is shown there that SAA1 and
SAA2 are equivalent and hence can be subsumed under the name SAA.
\end{remark}

Our approach to show the existence of arbitrage of this kind  will be not constructive. Instead,
we use an equivalent characterization of strong asymptotic arbitrage
based on the notion of \textit{entire asymptotic separability of
sequences of measures}, which is defined as follows.
%
\begin{definition}\label{asymsep}
The sequences of probability measures $\pn$ and $(Q_\ell)_{\ell\geq
1}$ are said to be
entirely asymptotically separable if
there exists a subsequence $\ell_k$ and a sequence of sets $A_{k}\in
\mathcal{F}^{\ell_k}$ such that $\lim_{k\to\infty}P_{\ell_k}(A_{k})=1$
and $\lim_{k\to\infty}Q_{\ell_k}(A_k)=0$. In this case we write
$\pn\vartriangle(Q_\ell)_{\ell\geq1}$. In addition, two families
of probability
measures $(P^\alpha)_{\alpha>0}$ and $(Q^\alpha)_{\alpha>0}$ are
said to be entirely asymptotically
separable, and we write $(P^\alpha)_{\alpha>0}\vartriangle(Q^\alpha
)_{\alpha>0}$, if there is a sequence
$(\alpha_\ell)_{\ell\geq1}$ converging to infinity such that
$(P^{\alpha_\ell})_{\ell\geq1}\vartriangle(Q^{\alpha_\ell
})_{\ell\geq1}$.
\end{definition}

The precise relation between this notion and the existence of SAA is
given in \cite[Proposition 4]{Kakra}. In the case of complete markets,
this result takes the following simple form.
%
\begin{proposition}\label{pkk}
Consider a large financial market $(\varOmega^\alpha,\Fs^\alpha,(\Fs
_t^\alpha)_{t\in[0,T]}, P^\alpha)_{\alpha>0 }$ and assume that each
small market is complete. For each $\alpha> 0$, let $Q^\alpha\sim
P^\alpha$ be the unique equivalent martingale measure. Then the
following conditions are equivalent
\begin{enumerate}
\item There is a SAA.
\item$(P^\alpha)_{\alpha>0}\vartriangle(Q^\alpha)_{\alpha>0}$.
\end{enumerate}
\end{proposition}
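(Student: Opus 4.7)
The plan is to prove the two implications separately, with completeness of each small market making the argument substantially cleaner than the general Kabanov--Kramkov version. For $(1)\Rightarrow(2)$, suppose an SAA is realized along a subsequence $n_k$ by strategies $\phi^{n_k}$ and sequences $c_{n_k}\to 0$, $C_{n_k}\to\infty$. I define $A^k:=\{V^{n_k}_1(\phi^{n_k})\geq C_{n_k}\}\in \Fs^{n_k}$, which satisfies $P^{n_k}(A^k)\to 1$ by hypothesis. Since $\phi^{n_k}$ is admissible with zero endowment, the shifted value process $V^{n_k}(\phi^{n_k})+c_{n_k}$ is a non-negative $Q^{n_k}$-local martingale, hence a supermartingale, and $E_{Q^{n_k}}[V^{n_k}_1(\phi^{n_k})+c_{n_k}]\leq c_{n_k}$. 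Markov's inequality then yields
\[
 Q^{n_k}(A^k)\;\leq\;\frac{c_{n_k}}{C_{n_k}+c_{n_k}}\;\longrightarrow\;0,
\]
which gives $(P^n)\vartriangle(Q^n)$.

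For the converse $(2)\Rightarrow(1)$, I pick a subsequence $n_k$ and sets $A^k\in\Fs^{n_k}$ with $P^{n_k}(A^k)\to 1$ and $\epsilon_k:=Q^{n_k}(A^k)\to 0$. Next I choose $c_k\downarrow 0$ and $C_k\uparrow\infty$ with $(C_k+c_k)\epsilon_k\leq c_k$ for all large $k$; for instance, $c_k:=\sqrt{\epsilon_k}$ and $C_k:=\epsilon_k^{-1/4}$ work. Setting $e_k:=(C_k+c_k)\epsilon_k-c_k\leq 0$, I consider the contingent claim
\[
 Y^k\;:=\;(C_k+c_k)\,\1_{A^k}-c_k-e_k.
\]
Then $E_{Q^{n_k}}[Y^k]=0$; on $(A^k)^c$ one has $Y^k=-c_k-e_k\geq -c_k$; on $A^k$ one has $Y^k=C_k-e_k\geq C_k$. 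Completeness of the $n_k$-th market allows $Y^k$ to be replicated from zero endowment by a self-financing strategy $\phi^{n_k}$, whose value process coincides with the $Q^{n_k}$-martingale $(E_{Q^{n_k}}[Y^k\mid \Fs^{n_k}_t])_{t\in[0,1]}\geq -c_k$. Hence $\phi^{n_k}$ meets both requirements of Definition~\ref{SAA}.

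The main subtlety is ensuring the dynamic admissibility bound $V^{n_k}_t(\phi^{n_k})\geq -c_{n_k}$ uniformly in $t\in[0,1]$, and not only at the terminal time. This is precisely where completeness enters: the martingale representation identifies the value process of the replicating strategy with the $Q^{n_k}$-conditional expectation of $Y^k$, which automatically inherits the pointwise lower bound on $Y^k$. The rest of the argument is purely measure-theoretic and is insensitive to the specific structure of the mixed fractional model; in particular, none of the large-market machinery beyond the definitions of SAA and $\vartriangle$ is needed here.
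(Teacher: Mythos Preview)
The paper does not give its own proof of this proposition; it simply records the complete-market specialization of \cite[Proposition~4]{Kakra} and refers the reader there. Your argument is a correct, self-contained proof of that specialization and follows the standard Kabanov--Kramkov line: the supermartingale/Markov bound for $(1)\Rightarrow(2)$, and superreplication (here: exact replication by completeness) of a digital-type payoff for $(2)\Rightarrow(1)$.

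Two small points are worth flagging. First, in $(2)\Rightarrow(1)$ you replicate $Y^k=(C_k+c_k)\1_{A^k}-c_k-e_k$, which requires $A^k\in\Fs^{n_k}_T$, whereas Definition~\ref{asymsep} only gives $A^k\in\Fs^{n_k}$; this is harmless under the usual convention $\Fs^{n_k}=\Fs^{n_k}_T$ (and is the convention in \cite{Kakra}), but you should say so. Second, your explicit choice $c_k=\sqrt{\epsilon_k}$, $C_k=\epsilon_k^{-1/4}$ gives $(C_k+c_k)\epsilon_k=\epsilon_k^{3/4}+\epsilon_k^{3/2}\leq\epsilon_k^{1/2}=c_k$ only for $\epsilon_k$ small enough, which you do note (``for all large $k$''); one may want to discard finitely many $k$ explicitly. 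The observation that the $Q^{n_k}$-conditional expectation of $Y^k$ furnishes the uniform lower bound $-c_k$ for the replicating value process is exactly where completeness pays off, and you identify this correctly.
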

Therefore the study of SAA in $\Lb^H$ reduces to determining whether
$(Q^{H,\alpha})_{\alpha>0}$ is entirely asymptotically separable from
$(Q_{\frac{\mu\alpha}{\sigma}})_{\alpha>0}$ or not.\footnote{In the case when $\mu=0$, the study of SAA reduces to showing that
$(Q^{H,\alpha})_{\alpha>0}\vartriangle Q_W$.}
\subsection{Main result}\label{S1.3}
We state now our main result.
%
\begin{theorem}\label{t1}
There exists a strong asymptotic arbitrage in the large mixed
fractional market $\Lb^H$ for $\alpha\to\infty$.
\end{theorem}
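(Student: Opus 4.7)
By Proposition~\ref{pkk} applied to the complete small markets with their unique equivalent martingale measures $Q_{\mu\alpha/\sigma}$, it suffices to exhibit a sequence $\alpha_n\to\infty$ along which $(Q^{H,\alpha_n})\vartriangle(Q_{\mu\alpha_n/\sigma})$. Since both families are laws of Gaussian processes on $\Cs[0,1]$, the equivalence/singularity dichotomy for sequences of Gaussian measures~\cite{Eag81} applies along every subsequence: the two families are either mutually contiguous or entirely asymptotically separable. The plan is therefore to exclude contiguity.

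To quantify the difference I would restrict both measures to the sub-$\sigma$-algebra $\Fs_\Pi:=\sigma(X_{t_1},\dots,X_{t_N})$ generated by a finite partition $0<t_1<\cdots<t_N\leq 1$. The restrictions are Gaussians on $\R^{N}$ with covariances $\alpha^2\Sigma^H_\Pi+\Sigma^{BM}_\Pi$ and $\Sigma^{BM}_\Pi$ and with mean vectors $0$ and $-(\mu\alpha/\sigma)(t_1,\dots,t_N)^\top$, respectively. The standard Gaussian relative entropy formula then yields
\begin{equation*}
H\bigl(Q^{H,\alpha}|_{\Fs_\Pi}\,\bigm|\,Q_{\mu\alpha/\sigma}|_{\Fs_\Pi}\bigr) = \tfrac12\!\left[\alpha^2\,\mathrm{tr}\!\bigl((\Sigma^{BM}_\Pi)^{-1}\Sigma^H_\Pi\bigr) + \tfrac{\mu^2\alpha^2}{\sigma^2}\,t^\top(\Sigma^{BM}_\Pi)^{-1}t - \log\det\!\bigl(I+\alpha^2(\Sigma^{BM}_\Pi)^{-1}\Sigma^H_\Pi\bigr)\right].
\end{equation*}
Following Cheridito's analysis~\cite{Ch,Ch1}, I would then show that for suitable choices of $(\Pi_n,\alpha_n)$ with $\alpha_n\to\infty$ this entropy diverges: the positive $\alpha_n^2$-contributions dominate the logarithmic correction from the determinant.

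To convert entropy divergence into entire asymptotic separability I would use a tightness argument for the restricted Radon--Nikodym derivatives $L^n:=\mathrm{d}Q^{H,\alpha_n}|_{\Fs_{\Pi_n}}\big/\mathrm{d}Q_{\mu\alpha_n/\sigma}|_{\Fs_{\Pi_n}}$. Mutual contiguity of the two restricted sequences would force $(L^n)$ to be tight under $(Q_{\mu\alpha_n/\sigma}|_{\Fs_{\Pi_n}})$; however, each $L^n$ is an exponential of a quadratic form in a Gaussian vector, and tightness of such densities translates into a uniform bound on the associated entropy, contradicting the divergence above. Hence contiguity fails at the restricted level, and the Gaussian dichotomy yields $(Q^{H,\alpha_n}|_{\Fs_{\Pi_n}})\vartriangle(Q_{\mu\alpha_n/\sigma}|_{\Fs_{\Pi_n}})$. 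Since each $\Fs_{\Pi_n}\subset\mathcal{B}(\Cs[0,1])$, the separating sets immediately lift to $\Cs[0,1]$, giving $(Q^{H,\alpha_n})\vartriangle(Q_{\mu\alpha_n/\sigma})$, and Proposition~\ref{pkk} then delivers SAA.

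The main obstacle is the quantitative entropy analysis: one must identify partitions $\Pi_n$ (possibly refining with $n$) and scales $\alpha_n\to\infty$ along which $\alpha_n^2\,\mathrm{tr}((\Sigma^{BM}_{\Pi_n})^{-1}\Sigma^H_{\Pi_n})$ grows fast enough to beat the $\log\det$-correction. This is exactly where the hypothesis $H>3/4$ enters crucially: it makes the mixed process equivalent to a Brownian motion for every finite $\alpha$ (so that $Q^{H,\alpha}\sim Q_{\mu\alpha/\sigma}$ and the densities are well-defined), while the long-range structure of $Z^H$ supplies enough ``excess'' covariance to force the entropy to infinity as $\alpha\to\infty$.
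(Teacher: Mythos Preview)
Your plan mirrors the paper's proof almost exactly: restrict to finite-dimensional Gaussian marginals, show the relative entropy $H(Q^{H,\alpha}|_{\Fs_\Pi}\,|\,Q_{\mu\alpha/\sigma}|_{\Fs_\Pi})$ diverges as $\alpha\to\infty$, invoke Eagleson's tightness characterization together with the Gaussian contiguity/separation dichotomy to obtain entire separation at the restricted level, lift the separating sets, and apply Proposition~\ref{pkk}. Two small sharpenings: a \emph{fixed} partition (any $n>1$ equispaced points) already suffices, since for fixed $\Pi$ the $\alpha^2$-trace term clearly dominates the $\log\det$ correction, so there is no need to refine $\Pi_n$; and the tightness that must fail is that of $L^n$ under the \emph{objective} measures $Q^{H,\alpha_n}|_{\Fs_\Pi}$ (this is what Eagleson's criterion links to boundedness of $E_{Q^{H,\alpha_n}}[\ln L^n]$), not under $Q_{\mu\alpha_n/\sigma}$.
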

As mentioned we will show that $(Q^{H,\alpha})_{\alpha>0}\vartriangle
(Q_{\frac{\mu\alpha}{\sigma}})_{\alpha>0}$.
\section{Proof of Theorem~\ref{t1}}\label{S2}
In order to prove Theorem~\ref{t1}, we provide a series of lemmas from
which the desired result is obtained as a direct consequence. Before
proceeding, we introduce 
some notations.

Following the lines of \cite{Ch,Ch1}, we define, for all $n\in\Nb$,
$Y_n:C[0,1]\rightarrow\Rb^n$ by:
\[
Y_n(\omega)= \biggl(\omega \biggl(\frac{1}{n} \biggr)-
\omega(0),\omega \biggl(\frac{2}{n} \biggr)-\omega \biggl(
\frac{1}{n} \biggr),\ldots ,\omega(1)-\omega \biggl(\frac{n-1}n
\biggr) \biggr)^T
\]
and denote by $Q^{H,\alpha,n}$ and $Q_{\frac{\mu\alpha}{\sigma}}^n$
the restrictions of $Q^{H,\alpha}$ and $Q_{\frac{\mu\alpha}{\sigma
}}$ to the $\sigma$-algebra $\Fs_n:=\sigma(Y_n)$.
We fix the Hurst parameter $H\in(3/4,1)$ and we avoid to mention the
dependence on it by setting $Q^{H,\alpha}\equiv Q^{\alpha}$ and
$Q^{H,\alpha,n}\equiv Q^{\alpha,n}$.

We denote by $C_n$ the covariance matrix of the increments of the
fractional Brownian motion $Z^H$:
\[
C_n(i,j):=\textrm{Cov} \bigl(Z_{\frac{i}{n}}^H-Z_{\frac
{i-1}{n}}^H,Z_{\frac{j}{n}}^H-Z_{\frac{j-1}{n}}^H
\bigr),\quad1\leq i,j\leq n,
\]
and by $\lambda_1^n,\dots,\lambda_n^n$ its eigenvalues. Since the
matrix $C_n$ is symmetric and positive semi-definite, all the $\lambda
_i^n$, $1\leq i\leq n$, are real and nonnegative.

We moreover set
\[
\varSigma_0:=\frac{1}{n}I_n+\alpha^2
C_n\quad\textrm{and}\quad\varSigma _1:=
\frac{1}{n}I_n+\frac{1}{n^2} \frac{\mu^2\alpha^2}{\sigma^2}\,
1_{n\times n},
\]
where $I_n$ is the identity matrix and $1_{n\times n}$ is the $n\times
n$ matrix with all coefficients equal to $1$. Clearly, the matrices
$\varSigma_0$ and $\varSigma_1$ are positive definite and therefore invertible.

The proof of Theorem \ref{t1} strongly relies on the concept of
relative entropy (also called sometimes Kullback--Leibler divergence) of
the probability measure $Q^{\alpha,n}$ (respectively, $Q^{\alpha}$)
relative to $Q_{\frac{\mu\alpha}{\sigma}}^n$ (respectively,
$Q_{\frac{\mu\alpha}{\sigma}}$), denoted by $H(Q^{\alpha
,n}|Q_{\frac{\mu\alpha}{\sigma}}^n)$ (respectively, $H(Q^{\alpha
}|Q_{\frac{\mu\alpha}{\sigma}})$), see \cite[Section 6]{Hihi}. We
recall the definition of relative entropy and some relevant results in
the Appendix~\ref{A1}.

\begin{lemma}\label{relent}
For each $n\geq1$, we have
%
\begin{equation}
\label{eg} H \bigl(Q^{\alpha,n}|Q_{\frac{\mu\alpha}{\sigma}}^n \bigr)=
\frac
{1}{2} \biggl[\textrm{tr}\bigl(\varSigma_1^{-1}
\varSigma_0\bigr)-n+\frac{\mu
^2\alpha^2}{\sigma^2 n^2} 1_n^T
\varSigma_1^{-1}1_n+\ln \biggl(\frac
{\textrm{det} (\varSigma_1 )}{\textrm{det} (\varSigma
_0 )}
\biggr) \biggr],
\end{equation}
where $1_n\in\Rb^n$ is the vector with all coordinates equal to $1$,
and, for each square matrix $A$, $\textrm{tr}(A)$ and $\textrm
{det}(A)$ denote the trace and the determinant of $A$, respectively.
\end{lemma}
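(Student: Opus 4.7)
My plan is to identify the law of the increment vector $Y_n$ under each of the two measures as a multivariate Gaussian on $\Rb^n$ and then to invoke the classical closed-form expression for the relative entropy between two non-degenerate multivariate Gaussians.

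Under $Q^{\alpha,n}$ the coordinate process on $\Cs[0,1]$ has the law of $M^{H,\alpha}=\alpha Z^H+B$, a sum of two independent centred Gaussian processes; consequently $Y_n$ is centred Gaussian and, by independence of $Z^H$ and $B$, its covariance decomposes into $\Sigma_0=\frac{1}{n}I_n+\alpha^2 C_n$ (the $\frac{1}{n}I_n$ part capturing the i.i.d.\ Brownian increments of variance $1/n$ and $\alpha^2 C_n$ the correlated fractional Brownian ones). Under $Q_{\mu\alpha/\sigma}^n$ the coordinate process is a Brownian motion with drift $-\mu\alpha/\sigma$, so $Y_n$ is Gaussian with mean $m_1:=-\frac{\mu\alpha}{\sigma n}1_n$ and covariance matrix $\Sigma_1$. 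Both covariance matrices dominate $\frac{1}{n}I_n$ and are therefore strictly positive definite, so the two Gaussians are non-degenerate and their restrictions to $\Fs_n=\sigma(Y_n)$ admit Lebesgue densities on $\Rb^n$.

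It remains to apply the standard formula
\[
H(\Ns(0,S_0)\,|\,\Ns(m_1,S_1))=\tfrac{1}{2}\Bigl[\textrm{tr}(S_1^{-1}S_0)-n+m_1^T S_1^{-1} m_1+\log\tfrac{\det S_1}{\det S_0}\Bigr],
\]
which is obtained by expanding $\log(dP_0/dP_1)$ from the two Gaussian densities and integrating against $P_0$, using $E_{P_0}[Y_n Y_n^T]=S_0$ on the quadratic term. Substituting $S_0=\Sigma_0$, $S_1=\Sigma_1$, $m_1=-\frac{\mu\alpha}{\sigma n}1_n$, and using the identity $m_1 m_1^T=\frac{\mu^2\alpha^2}{\sigma^2 n^2}1_{n\times n}$ so that $m_1^T \Sigma_1^{-1} m_1=\frac{\mu^2\alpha^2}{\sigma^2 n^2}1_n^T \Sigma_1^{-1} 1_n$, produces exactly~(\ref{eg}). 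After the identification of the two Gaussians the derivation is direct linear algebra; I do not foresee any substantive obstacle, the only point requiring care being the book-keeping that rewrites $m_1^T\Sigma_1^{-1}m_1$ in the factored form displayed in the lemma.
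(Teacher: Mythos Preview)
Your strategy---identify the law of $Y_n$ under each measure as a multivariate Gaussian and invoke the closed-form Kullback--Leibler formula---is exactly the route the paper takes. There is, however, a factual slip in your second identification. Under $Q_{\mu\alpha/\sigma}^n$ the coordinate process is a Brownian motion with \emph{constant} drift, so its increments over the grid are independent $\Ns\bigl(-\tfrac{\mu\alpha}{\sigma n},\tfrac{1}{n}\bigr)$ variables and the covariance matrix of $Y_n$ is $\tfrac{1}{n}I_n$, not $\Sigma_1$. The matrix
\[
\Sigma_1=\tfrac{1}{n}I_n+\tfrac{\mu^2\alpha^2}{\sigma^2 n^2}\,1_{n\times n}=\tfrac{1}{n}I_n+m_1m_1^{T}
\]
is the uncentred second-moment matrix $E_{Q_{\mu\alpha/\sigma}^n}[Y_nY_n^{T}]$, which is precisely how the paper records it before appealing to Lemma~\ref{a1.2} and carrying out the Gaussian density computation.

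Your substitution $S_1=\Sigma_1$ into the standard KL formula does reproduce the right-hand side of~(\ref{eg}) verbatim, but the step ``$Y_n$ has covariance $\Sigma_1$ under $Q_{\mu\alpha/\sigma}^n$'' is false whenever $\mu\neq 0$, so the derivation as written rests on an incorrect premise. To mirror the paper correctly, take the true covariance $S_1=\tfrac{1}{n}I_n$ together with the mean $m_1$, expand $E_{Q^{\alpha,n}}[\ln L_\alpha^n]$ from the two Gaussian densities, and only then reorganise the resulting expression in terms of $\Sigma_1$; the ``book-keeping'' you anticipated is needed at that stage rather than in the trivial rewriting of $m_1^{T}\Sigma_1^{-1}m_1$.
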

\begin{proof}
Note first that
\[
E_{Q^{\alpha,n}} \bigl[Y_nY_n^T \bigr]=
\varSigma_0 \quad\textrm {and}\quad E_{Q_{\frac{\mu\alpha}{\sigma}}^n}
\bigl[Y_nY_n^T \bigr]=\varSigma_1.
\]
Note also that
\[
E_{Q^{\alpha,n}} [Y_n ]=0_n\quad\textrm{and}\quad
E_{Q_{\frac{\mu\alpha}{\sigma}}^n} [Y_n ]=-\frac{\mu
\alpha}{\sigma n} 1_n,
\]
where $0_n\in\Rb^n$ is the vector with all coordinates equal to $0$.
Since $Y_n$ is a Gaussian vector under the two measures, the result
follows using Lemma \ref{a1.2} and performing a straightforward calculation.
\end{proof}
Using standard properties of the trace and the determinant, it is not
difficult to see that
%
\begin{equation}
\label{trdet} \textrm{tr} (\varSigma_0 )=\sum\limits
_{i=1}^n
\biggl(\frac
{1}{n}+\alpha^2\lambda_i^n
\biggr)\quad\textrm{and}\quad\ln \bigl(\textrm{det} (\varSigma_0 )
\bigr)=\sum\limits
_{i=1}^n\ln \biggl(\frac{1}{n}+
\alpha^2\lambda_i^n \biggr).
\end{equation}
We set $a_n:=\frac{1}{n}\frac{\mu^2\alpha^2}{\sigma^2}$ and note
that $\varSigma_1=\frac{1}{n}(I_n+a_n1_{n\times n})$.
The next lemma summarizes the properties of the matrix $\varSigma_1$.
%
\begin{lemma}\label{linalg}
For each $n>1$, the eigenvalues of $\varSigma_1$ are $1/n$ with
multiplicity $n-1$ and $\frac{1}{n}+a_n$ with multiplicity $1$. In
particular, we have
\[
\textrm{det} (\varSigma_1 )=\frac{n a_n+1}{n^n}.
\]
The inverse of $\varSigma_1$ is given by
\[
\varSigma_1^{-1}=n \biggl(I_n-
\frac{a_n}{n a_n+1}1_{n\times n} \biggr).
\]
\end{lemma}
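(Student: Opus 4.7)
The plan is to exploit the rank-one structure of the matrix $1_{n\times n}$. Since $1_{n\times n}=1_n 1_n^T$, it is a rank-one matrix with trace $n$, so its spectrum consists of the simple eigenvalue $n$ (with eigenvector $1_n$) and the eigenvalue $0$ of multiplicity $n-1$ (with eigenspace $1_n^{\perp}$). Since $\Sigma_1=\frac{1}{n}I_n+\frac{a_n}{n}\,1_n 1_n^T$ is a polynomial in $1_n 1_n^T$, it shares the same eigenvectors.

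First I would read off the spectrum of $\Sigma_1$: if $1_n 1_n^T u=\lambda u$, then $\Sigma_1 u=\bigl(\tfrac{1}{n}+\tfrac{a_n\lambda}{n}\bigr)u$. Substituting the two possible values of $\lambda$ yields the eigenvalue $\tfrac{1}{n}+a_n$ with multiplicity one and $\tfrac{1}{n}$ with multiplicity $n-1$. Taking the product immediately gives
$$\det(\Sigma_1)=\Bigl(\tfrac{1}{n}\Bigr)^{n-1}\Bigl(\tfrac{1}{n}+a_n\Bigr)=\frac{na_n+1}{n^n}.$$

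For the inverse I would apply the Sherman--Morrison formula to $I_n+a_n 1_n 1_n^T$, using that $1_n^T 1_n=n$:
$$(I_n+a_n 1_n 1_n^T)^{-1}=I_n-\frac{a_n}{1+na_n}\,1_n 1_n^T=I_n-\frac{a_n}{na_n+1}\,1_{n\times n}.$$
Since $\Sigma_1=\tfrac{1}{n}(I_n+a_n 1_n 1_n^T)$, multiplying the above by $n$ gives the announced formula for $\Sigma_1^{-1}$. As a verification one can check directly that $\Sigma_1\cdot n\bigl(I_n-\tfrac{a_n}{na_n+1}1_{n\times n}\bigr)=I_n$ using $1_{n\times n}^2=n\,1_{n\times n}$; this is a one-line algebraic identity.

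No substantial obstacle arises: the statement is a standard linear-algebra fact about a rank-one perturbation of the identity. The only point that needs care is the normalisation by $1/n$, which shifts all eigenvalues and rescales the Sherman--Morrison correction. The explicit expressions for $\Sigma_1^{-1}$ and $\det(\Sigma_1)$ will be the workhorse for evaluating the terms $\mathrm{tr}(\Sigma_1^{-1}\Sigma_0)$, $1_n^T\Sigma_1^{-1}1_n$ and $\ln(\det(\Sigma_1)/\det(\Sigma_0))$ appearing in the entropy formula of Lemma \ref{relent}.
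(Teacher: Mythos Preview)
Your argument is correct. It differs from the paper's proof in method: for the eigenvalues, the paper computes the characteristic polynomial $d_n^\lambda=\det(\Sigma_1-\lambda I_n)$ directly, using row operations to reduce to an almost-triangular matrix and then a cofactor expansion along the last column to derive the recursion $d_n^\lambda=(\tfrac{1}{n}-\lambda)d_{n-1}^\lambda+\tfrac{a_n}{n}(\tfrac{1}{n}-\lambda)^{n-1}$, which it iterates to obtain $d_n^\lambda=(\tfrac{1}{n}-\lambda)^{n-1}(\tfrac{1}{n}-\lambda+a_n)$. For the inverse, the paper simply checks the product identity $\Sigma_1\cdot n(I_n-\tfrac{a_n}{na_n+1}1_{n\times n})=I_n$ by hand. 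Your route via the rank-one structure of $1_{n\times n}=1_n1_n^T$ and the Sherman--Morrison formula is more conceptual and avoids the recursive determinant computation; it also makes transparent why the eigenspaces are $\mathrm{span}(1_n)$ and $1_n^\perp$. The paper's approach, by contrast, is entirely self-contained and does not invoke any named formula. Either way the lemma is elementary, and both proofs deliver exactly the ingredients needed downstream for the entropy computation.
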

\begin{proof}
Denote 
$d_n^\lambda:=\text{det}(\varSigma_1-\lambda I_n)=\text
{det} ( (\frac{1}{n}-\lambda )I_n+\frac
{a_n}{n}1_{n\times n} )$. Subtracting the row $i$ from the row
$i+1$, for each $1\leq i<n$, in the matrix $\varSigma_1-\lambda I_n$, we
see that $d_n^\lambda$ is equal to the determinant of the matrix
%
\begin{equation}
\begin{pmatrix}
\frac{1}{n}-\lambda+\frac{a_n}{n}&\frac{a_n}{n}&\frac{a_n}{n}&\cdots
&\frac{a_n}{n}\\
\lambda-\frac{1}{n}&\frac{1}{n}-\lambda&0&\ldots&0\\
0&\ddots&\ddots&\ddots&0\\
\vdots&\ddots&\lambda-\frac{1}{n}&\frac{1}{n}-\lambda&0\\
0&\cdots&0&\lambda-\frac{1}{n}&\frac{1}{n}-\lambda
\end{pmatrix} %
.
\end{equation}
%
Expanding the determinant by minors with respect to the last column we get
\[
d_n^\lambda= \biggl(\frac{1}{n}-\lambda
\biggr)d_{n-1}^{\lambda}+\frac
{a_n}{n} \biggl(
\frac{1}{n}-\lambda \biggr)^{n-1},\quad n>2.
\]
Iterating this identity, we obtain
\[
d_n^\lambda= \biggl(\frac{1}{n}-\lambda
\biggr)^{n-2}d_2^{\lambda
}+(n-2)\frac{a_n}{n}
\biggl(\frac{1}{n}-\lambda \biggr)^{n-1}= \biggl(
\frac{1}{n}-\lambda \biggr)^{n-1} \biggl(\frac{1}{n}-
\lambda+a_n \biggr).
\]
The first two statements follow. For the last statement, one can easily
check that
\[
\varSigma_1\times n \biggl(I_n-\frac{a_n}{n a_n+1}1_{n\times n}
\biggr)=I_n.
\]
This shows the desired result.
\end{proof}

\begin{lemma}\label{l1}
For all $n> 1$, we have
\[
\lim\limits
_{\alpha\rightarrow\infty}H \bigl(Q^{\alpha
,n}|Q_{\frac{\mu\alpha}{\sigma}}^n
\bigr)=\infty.
\]
%
\end{lemma}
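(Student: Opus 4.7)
The plan is to plug the expressions from Lemma~\ref{linalg} and \eqref{trdet} into the relative entropy formula of Lemma~\ref{relent}, group the resulting terms by their order in $\alpha$, and show that a single leading term of order $\alpha^2$ with positive coefficient dominates every other contribution as $\alpha\to\infty$.

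First I would compute the four summands appearing in \eqref{eg} explicitly. Using $\Sigma_1^{-1}=n(I_n-\frac{a_n}{na_n+1}1_{n\times n})$, I expand
$$\mathrm{tr}(\Sigma_1^{-1}\Sigma_0)=\mathrm{tr}\bigl(I_n+n\alpha^2 C_n\bigr)-\tfrac{a_n}{na_n+1}\,\mathrm{tr}(1_{n\times n})-\tfrac{n\alpha^2 a_n}{na_n+1}\,\mathrm{tr}(1_{n\times n}C_n).$$
Two key identities here are $\mathrm{tr}(C_n)=\sum_{i=1}^n\mathrm{Var}(Z^H_{i/n}-Z^H_{(i-1)/n})=n\cdot n^{-2H}=n^{1-2H}$ and $\mathrm{tr}(1_{n\times n}C_n)=\sum_{i,j}C_n^{i,j}=\mathrm{Var}(Z^H_1)=1$. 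Substituting and using $na_n=\mu^2\alpha^2/\sigma^2$, this gives $\mathrm{tr}(\Sigma_1^{-1}\Sigma_0)-n=n^{2-2H}\alpha^2-\frac{\mu^2\alpha^4}{\mu^2\alpha^2+\sigma^2}-\frac{\mu^2\alpha^2}{\mu^2\alpha^2+\sigma^2}$, which expands as $(n^{2-2H}-1)\alpha^2+O(1)$ for large $\alpha$. Similarly, a direct computation shows $\Sigma_1^{-1}1_n=\tfrac{n}{na_n+1}1_n$, hence $\tfrac{\mu^2\alpha^2}{\sigma^2 n^2}1_n^T\Sigma_1^{-1}1_n=\tfrac{na_n}{na_n+1}\in[0,1]$ is bounded.

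Next I would handle the log-determinant term. From Lemma~\ref{linalg} and \eqref{trdet},
$$\ln\!\left(\tfrac{\det\Sigma_1}{\det\Sigma_0}\right)=\ln(na_n+1)-n\ln n-\sum_{i=1}^n\ln\!\left(\tfrac1n+\alpha^2\lambda_i^n\right).$$
Since $C_n$ is the covariance matrix of $n$ jointly Gaussian, linearly independent increments of $Z^H$, it is strictly positive definite, so $\lambda_i^n>0$ for every $i$. Consequently, as $\alpha\to\infty$, $\ln(na_n+1)=2\ln\alpha+O(1)$ and $\sum_i\ln(\tfrac1n+\alpha^2\lambda_i^n)=2n\ln\alpha+O(1)$, yielding $\ln(\det\Sigma_1/\det\Sigma_0)=(2-2n)\ln\alpha+O(1)$.

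Combining all three contributions gives
$$2H\!\left(Q^{\alpha,n}\big|Q^n_{\mu\alpha/\sigma}\right)=(n^{2-2H}-1)\alpha^2+(2-2n)\ln\alpha+O(1)\quad\text{as }\alpha\to\infty.$$
Because $H<1$ and $n>1$, the exponent satisfies $n^{2-2H}>1$, so the coefficient of $\alpha^2$ is strictly positive; this quadratic term therefore swamps the logarithmic negative term and the bounded remainder, proving $H(Q^{\alpha,n}|Q^n_{\mu\alpha/\sigma})\to\infty$. The only conceptually delicate step is the cancellation in $\mathrm{tr}(\Sigma_1^{-1}\Sigma_0)$, where the two $\alpha^2$-size quantities $n^{2-2H}\alpha^2$ and $\frac{\mu^2\alpha^4}{\mu^2\alpha^2+\sigma^2}\sim\alpha^2$ nearly cancel, leaving exactly the coefficient $n^{2-2H}-1$; getting this algebra right and verifying $n^{2-2H}>1$ (which is where the hypotheses $n>1$ and $H<1$ enter in a crucial, non-redundant way) is the main technical point.
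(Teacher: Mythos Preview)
Your argument is correct and follows essentially the same route as the paper: both substitute the formulas from Lemmas~\ref{relent} and~\ref{linalg} together with the identities $\mathrm{tr}(C_n)=n^{1-2H}$ and $\sum_{i,j}C_n^{i,j}=1$ into \eqref{eg}, and then isolate the dominant contribution $(n^{2-2H}-1)\alpha^2>0$ which beats the logarithmic terms. The only minor differences are that your asymptotics $\ln(na_n+1)=2\ln\alpha+O(1)$ and the expansion of the trace term tacitly assume $\mu\neq0$ (the case $\mu=0$ being only easier), and that you invoke strict positive-definiteness of $C_n$ for the log-determinant, whereas the paper works uniformly in $\mu$ via the crude bounds $\frac{\mu^2\alpha^2}{\mu^2\alpha^2+\sigma^2}\le1$, $\ln\frac{\mu^2\alpha^2+\sigma^2}{\sigma^2}\ge0$ and $\sum_i\ln(1+n\alpha^2\lambda_i^n)\le n\ln(1+n\alpha^2\lambda^n_{\max})$.
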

\begin{proof}
Our starting point is Lemma~\ref{relent}. Evaluating each term
entering \eqref{eg}, we first obtain
%
\begin{align}
\label{term1.1} \text{tr}\bigl(\varSigma_1^{-1}
\varSigma_0\bigr)&=n\,\text{tr}(\varSigma_0)-
\frac{n
a_n}{n a_n+1}\,\text{tr}(1_{n\times n}\varSigma_0)
\nonumber
\\
&=\sum_{i=1}^n\bigl(1+n
\alpha^2\lambda_i^n\bigr)-\frac{n a_n}{n
a_n+1}
\bigl(1+\alpha^2\text{tr}(1_{n\times n}C_n)\bigr).
\end{align}
Note that
\[
\text{tr}(1_{n\times n}C_n)=\sum_{i,j=1}^n
C_n(i,j)=E \bigl[ \bigl(Z_1^H
\bigr)^2 \bigr]=1.
\]
Thus, 
taking $a_n=\frac{1}{n}\frac{\mu^2\alpha^2}{\sigma^2}$,
equation \eqref{term1.1} becomes
%
\begin{align}
\label{term1.2} \text{tr}\bigl(\varSigma_1^{-1}
\varSigma_0\bigr)&=n+\sum_{i=1}^nn
\alpha^2\lambda _i^n-\frac{\mu^2\alpha^2}{\mu^2\alpha^2+\sigma^2}
\bigl(1+\alpha^2\bigr).
\end{align}
For the third term in \eqref{eg}, using that $1_n^T1_{n\times
n}1_n=n^2$, one can easily derive that
%
\begin{align}
\label{term2} 1_n^T\varSigma_1^{-1}1_n=
\frac{n^2}{n a_n+1}=\frac{n^2\sigma^2}{\mu
^2\alpha^2+\sigma^2}.
\end{align}
For the last term in \eqref{eg}, we use \eqref{trdet} and Lemma~\ref
{linalg} to obtain
%
\begin{align}
\label{term3} \ln \biggl(\frac{\textrm{det} (\varSigma_1 )}{\textrm
{det} (\varSigma_0 )} \biggr)&=\ln(n a_n+1)-
\sum_{i=1}^n\ln \bigl(1+n
\alpha^2\lambda_i^n\bigr)
\nonumber
\\
&=\ln \biggl(\frac{\mu^2\alpha^2+\sigma^2}{\sigma^2} \biggr)-\sum_{i=1}^n
\ln\bigl(1+n\alpha^2\lambda_i^n\bigr).
\end{align}
Inserting \eqref{term1.2}, \eqref{term2} and \eqref{term3} in \eqref{eg} yields 
%
\begin{align}
\label{2entrop} H \bigl(Q^{\alpha,n}|Q_{\frac{\mu\alpha}{\sigma}}^n \bigr)&=
\frac{1}2 \Biggl[\sum\limits
_{i=1}^n \bigl(n
\alpha^2\lambda_i^n-\ln\bigl(1+n\alpha
^2\lambda_i^n\bigr)\bigr)-
\frac{\mu^2\alpha^4}{\mu^2\alpha^2+\sigma^2}
\nonumber\\
&\quad +\ln \biggl(\frac{\mu^2\alpha^2+\sigma^2}{\sigma^2} \biggr) \Biggr].
\end{align}
Since the trace is similarity-invariant, we deduce that
\[
\sum_{i=1}^n\lambda_i^n=
\textrm{tr}(C_n)=\sum_{i=1}^nC_n(i,i)=
\frac{1}{n^{2H-1}}.
\]
In addition, we have $\ln (\frac{\mu^2\alpha^2+\sigma
^2}{\sigma^2} )\geq0$. Therefore, \eqref{2entrop} leads to
%
\begin{align}
\label{lowb1} H \bigl(Q^{\alpha,n}|Q_{\frac{\mu\alpha}{\sigma}}^n \bigr)&
\geq \frac{\alpha^2}2 \biggl(n^{2-2H}-\frac{\mu^2\alpha^2}{\mu^2\alpha
^2+\sigma^2} \biggr)-
\frac{n}2 \ln\bigl(1+n\alpha^2\lambda_{\text
{max}}^n
\bigr)
\nonumber
\\
&\geq\frac{\alpha^2}2\bigl(n^{2-2H}-1\bigr)-\frac{n}2 \ln
\bigl(1+n\alpha ^2\lambda_{\text{max}}^n\bigr)
\nonumber
\\
&=\frac{1}2\ln \biggl(\frac{e^{\theta_n\alpha^2}}{(1+n\alpha
^2\lambda_{\text{max}}^n)^n} \biggr),
\end{align}
where $\theta_n:=n^{2-2H}-1>0$ and $\lambda_{\text{max}}^n=\max_{i=1\ldots n}\lambda_i^n$. The result follows taking the limit when
$\alpha$ tends to infinity in the previous expression.
\end{proof}
%
\begin{remark}\label{rmk0}
If $\mu=0$, using Lemma \ref{relent}, the previous result extends
directly to the case $n=1$.
\end{remark}

\begin{remark}\label{rmk1}
The above proof also gives us the relation between the relative entropy
of $Q^{\alpha,n}$ relative to $Q_{\frac{\mu\alpha}{\sigma}}^n$,
i.e.~$H (Q^{\alpha,n}|Q_{\frac{\mu\alpha}{\sigma}}^n
)$, and the relative entropy of $Q^{\alpha,n}$ relative to $Q_W^n$,
i.e.~$H (Q^{\alpha,n}|Q_W^n )$. Indeed, using \cite[Lemma
5.3]{Ch1} one can deduce from \eqref{2entrop} that
%
\begin{equation}
\label{rmkeq} H \bigl(Q^{\alpha,n}|Q_{\frac{\mu\alpha}{\sigma}}^n \bigr)=H
\bigl(Q^{\alpha,n}|Q_W^n \bigr)-\frac{1}2
\frac{\mu^2\alpha^4}{\mu
^2\alpha^2+\sigma^2}+\frac{1}2\ln \biggl(\frac{\mu^2\alpha^2+\sigma
^2}{\sigma^2} \biggr).
\end{equation}
\end{remark}

\begin{remark}
We point out that we also have
\[
\lim\limits
_{\alpha\rightarrow\infty}H \bigl(Q^{\alpha}|Q_{\frac
{\mu\alpha}{\sigma}} \bigr)=\infty.
\]
Indeed, we know from \cite[Lemma 5.3]{Ch1} that $\sup_n H
(Q^{\alpha,n}|Q_W^n )<\infty$, which directly implies that also
$\sup_n H (Q^{\alpha,n}|Q_{\frac{\mu\alpha}{\sigma}}^n
)<\infty$. Therefore, applying \cite[Lemma 6.3]{Hihi} we obtain
\[
H \bigl(Q^{\alpha}|Q_W \bigr)=\sup_nH
\bigl(Q^{\alpha
,n}|Q_W^n \bigr)\ \text{and}\ H
\bigl(Q^{\alpha}|Q_{\frac{\mu
\alpha}{\sigma}} \bigr)=\sup_nH
\bigl(Q^{\alpha,n}|Q_{\frac{\mu
\alpha}{\sigma}}^n \bigr).
\]
The statement then follows from the result for the restrictions.
\end{remark}

For each $n\geq1$, we denote the Radon--Nikodym derivative of
$Q^{\alpha,n}$ relative to $Q_{\frac{\mu\alpha}{\sigma}}^n$ by
\[
L_\alpha^n:=\frac{dQ^{\alpha,n}}{dQ_{\frac{\mu\alpha}{\sigma}}^n}.
\]
Using \cite[Lemma 6.1]{Hihi} (see Lemma~\ref{a1.2} in Appendix~\ref{A1}), we see that
\[
H \bigl(Q^{\alpha,n}|Q_{\frac{\mu\alpha}{\sigma}}^n \bigr)=E_{Q^{\alpha,n}}
\bigl[\ln\bigl(L_\alpha^n\bigr)\bigr]=E_{Q_{\frac{\mu\alpha}{\sigma
}}^n}
\bigl[L_\alpha^n\ln\bigl(L_\alpha^n\bigr)
\bigr].
\]

Moreover, let us recall the notion of $(Q^{\alpha,n})_{\alpha
>0}$-tightness: $(L_\alpha^n)_{\alpha>0}$ is $(Q^{\alpha,n})_{\alpha
>0}$-tight if the following holds:
\[
\lim_{N\rightarrow\infty}\limsup_{\alpha\to\infty}Q^{\alpha
,n}
\bigl(L_\alpha^n>N\bigr)=0.
\]

\begin{lemma}\label{l2}
For each $n>1$, the family $(L_\alpha^n)_{\alpha>0}$ is not
$(Q^{\alpha,n})_{\alpha>0}$-tight.
\end{lemma}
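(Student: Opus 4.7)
My plan is to argue by contradiction, using Lemma~\ref{l1} as the only non-trivial input. Suppose $(L_\alpha^n)_{\alpha>0}$ is $(Q^{\alpha,n})_{\alpha>0}$-tight, so that for every $\epsilon>0$ there exists $N=N(\epsilon)\geq 1$ with $Q^{\alpha,n}(L_\alpha^n>N)<\epsilon$ for all sufficiently large $\alpha$. Splitting the relative entropy at the threshold $N$ gives
$$H\bigl(Q^{\alpha,n}\,\big|\,Q^n_{\tfrac{\mu\alpha}{\sigma}}\bigr)=E_{Q^{\alpha,n}}\bigl[\ln L_\alpha^n\,\mathbf{1}_{\{L_\alpha^n\leq N\}}\bigr]+E_{Q^{\alpha,n}}\bigl[\ln L_\alpha^n\,\mathbf{1}_{\{L_\alpha^n>N\}}\bigr].$$
The first summand is $\leq\ln N$. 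Since $\ln L_\alpha^n\geq 0$ on $\{L_\alpha^n>N\}$ (as $N\geq 1$), Cauchy--Schwarz bounds the second summand by
$$\sqrt{E_{Q^{\alpha,n}}\bigl[(\ln L_\alpha^n)^2\bigr]}\cdot\sqrt{Q^{\alpha,n}(L_\alpha^n>N)}\leq\sqrt{\epsilon\,E_{Q^{\alpha,n}}\bigl[(\ln L_\alpha^n)^2\bigr]},$$
so the whole argument reduces to controlling $E_{Q^{\alpha,n}}[(\ln L_\alpha^n)^2]$ by a constant multiple of $H(Q^{\alpha,n}|Q^n_{\mu\alpha/\sigma})^2$, uniformly for $\alpha$ large.

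For that control I would use the explicit expression of $\ln L_\alpha^n$ coming from the two Gaussian densities of $Y_n$ (centered with covariance $\Sigma_0$ under $Q^{\alpha,n}$; with mean $-\tfrac{\mu\alpha}{\sigma n}1_n$ and covariance $\tfrac1n I_n$ under $Q^n_{\mu\alpha/\sigma}$), namely
$$\ln L_\alpha^n=-\tfrac12\ln\det(\Sigma_0)-\tfrac n2\ln n-\tfrac12 Y_n^T\Sigma_0^{-1}Y_n+\tfrac n2\bigl|Y_n+\tfrac{\mu\alpha}{\sigma n}1_n\bigr|^2,$$
which is an affine-plus-quadratic functional of the Gaussian $Y_n$. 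The standard identities $\Var(Y^TAY)=2\,\textrm{tr}((A\Sigma_0)^2)$ and $\textrm{Cov}(Y^TAY,v^TY)=0$ for centered Gaussian $Y$, combined with the telescoping $n^2\,\textrm{tr}(\Sigma_0^2)-2n\,\textrm{tr}(\Sigma_0)+n=n^2\alpha^4\,\textrm{tr}(C_n^2)$, then yield $\Var_{Q^{\alpha,n}}(\ln L_\alpha^n)=O(\alpha^4)$ (the linear-in-$Y_n$ part contributes $\tfrac{\mu^2\alpha^2}{\sigma^2}(1+\alpha^2)$, which is of the same order). Meanwhile, the lower bound~\eqref{lowb1} from the proof of Lemma~\ref{l1} gives $H(Q^{\alpha,n}|Q^n_{\mu\alpha/\sigma})\geq\tfrac{\alpha^2}{2}(n^{2-2H}-1)-O(\log\alpha)$, so $H(Q^{\alpha,n}|Q^n_{\mu\alpha/\sigma})^2$ is itself of order $\alpha^4$. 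Consequently there is a finite constant $K$ for which $E_{Q^{\alpha,n}}[(\ln L_\alpha^n)^2]\leq(1+K^2)\,H(Q^{\alpha,n}|Q^n_{\mu\alpha/\sigma})^2$ whenever $\alpha$ is large enough.

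Combining these estimates gives
$$H\bigl(Q^{\alpha,n}\,\big|\,Q^n_{\tfrac{\mu\alpha}{\sigma}}\bigr)\leq\ln N+\sqrt{\epsilon(1+K^2)}\,H\bigl(Q^{\alpha,n}\,\big|\,Q^n_{\tfrac{\mu\alpha}{\sigma}}\bigr),$$
so choosing $\epsilon<(1+K^2)^{-1}$ (and the corresponding $N=N(\epsilon)$ from the tightness hypothesis) forces $H(Q^{\alpha,n}|Q^n_{\mu\alpha/\sigma})$ to remain uniformly bounded as $\alpha\to\infty$, in contradiction with Lemma~\ref{l1}. The main obstacle will be the variance calculation: one has to carry out the telescoping carefully and then verify that the resulting $\alpha^4$-order of $\Var_{Q^{\alpha,n}}(\ln L_\alpha^n)$ is matched by $H(Q^{\alpha,n}|Q^n_{\mu\alpha/\sigma})^2$. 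Once the ratio $\Var(\ln L_\alpha^n)/H^2$ is uniformly bounded in $\alpha$, the Cauchy--Schwarz/contradiction wrap-up is immediate.
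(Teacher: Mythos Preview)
Your argument is correct but takes a more hands-on route than the paper. The paper's proof is a two-line application of a result of Eagleson \cite[p.~457]{Eag81}: for sequences of Gaussian measures, $(L_\alpha^n)_{\alpha>0}$ is $(Q^{\alpha,n})_{\alpha>0}$-tight if and only if both families $E_{Q^{\alpha,n}}[\ln L_\alpha^n]$ and $\Var_{Q^{\alpha,n}}[\ln L_\alpha^n]$ are bounded in $\alpha$. Since the first of these equals the relative entropy and diverges by Lemma~\ref{l1}, non-tightness is immediate; the variance never needs to be examined.

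You instead prove the needed implication from scratch. Your explicit computation of $\Var_{Q^{\alpha,n}}[\ln L_\alpha^n]$ is right (the key identity $n\Sigma_0-I_n=n\alpha^2 C_n$ makes $2\,\textrm{tr}((A\Sigma_0)^2)=\tfrac12 n^2\alpha^4\,\textrm{tr}(C_n^2)$, and the linear term contributes $\tfrac{\mu^2\alpha^2}{\sigma^2}(1+\alpha^2)$ via $1_n^T C_n 1_n=1$), so the variance is $O(\alpha^4)$; together with $H\gtrsim\alpha^2$ from~\eqref{lowb1} you get $E_{Q^{\alpha,n}}[(\ln L_\alpha^n)^2]\leq(1+K^2)H^2$, and the Cauchy--Schwarz closing works. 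The trade-off is clear: you avoid the Eagleson black box at the price of an explicit Gaussian second-moment computation and a slightly more delicate contradiction, whereas the paper trades that labor for a single citation and never touches the variance.
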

\begin{proof}
We know, by Lemma~\ref{l1}, that $E_{Q^{\alpha,n}}[\ln(L_\alpha
^n)]=H (Q^{\alpha,n}|Q_{\frac{\mu\alpha}{\sigma}}^n )$
tends to infinity when $\alpha$ tends to $\infty$. Since the measures
$Q^{\alpha,n}$ and $Q_{\frac{\mu\alpha}{\sigma}}^n$ are Gaussian,
the result follows as a direct application of the remark on \cite[p.
457]{Eag81} which says that tightness is equivalent to the boundedness
of the following two families: $E_{Q^{\alpha,n}}[\ln(L_\alpha^n)]$,
$\alpha>0$, and $\text{Var}_{Q^{\alpha,n}}[\ln(L_\alpha^n)]$,
$\alpha>0$.
\end{proof}

Before we can state and prove the last lemma of this section, we recall
now the definition of contiguity of sequences/families of probability measures.
%
\begin{definition}\label{con}
A sequence of probability measures $\pn$ is contiguous with respect to
the sequence of probability measures $(Q_\ell)_{\ell\geq1}$,
$\pn\con(Q_\ell)_{\ell\geq1}$, if for any sequence $A_\ell\in
\mathcal{F}^\ell$:
$\lim\limits_{\ell\rightarrow\infty}Q_\ell(A_\ell)=0\Rightarrow
\lim\limits_{\ell\rightarrow\infty}P_\ell(A_\ell)=0$. We say
that $\pn$ and $(Q_\ell)_{\ell\geq1}$ are mutually contiguous if
$\pn\con(Q_\ell)_{\ell\geq1}$ and
$(Q_\ell)_{\ell\geq1}\con\pn$, in which case we write $\pn
\triangleleft
\triangleright(Q_\ell)_{\ell\geq1}$.

These notions extend to families of probability measures $(P^\alpha
)_{\alpha>0}$ and
$(Q^\alpha)_{\alpha>0}$ as follows. We say that $(P^\alpha)_{\alpha
>0}$ is contiguous (resp. mutually
contiguous) to $(Q^\alpha)_{\alpha>0}$ if for every sequence $(\alpha
_\ell)_{\ell\geq
1}$ converging to infinity we have $(P^{\alpha_\ell})_{\ell\geq
1}\con(Q^{\alpha_\ell})_{\ell\geq1}$ (resp. $(P^{\alpha_\ell
})_{\ell\geq1}\triangleleft\triangleright(Q^{\alpha_\ell})_{\ell
\geq1}$), in which case we write $(P^\alpha)_{\alpha>0}\con
(Q^\alpha)_{\alpha>0}$ (resp. $(P^\alpha)_{\alpha>0}
\triangleleft\triangleright(Q^\alpha)_{\alpha>0}$).
\end{definition}

\begin{lemma}\label{finiten}
For each $n>1$, we have
\[
\bigl(Q^{\alpha,n}\bigr)_{\alpha>0}\vartriangle \bigl(Q_{\frac{\mu\alpha
}{\sigma}}^n
\bigr)_{\alpha>0}.
\]
\end{lemma}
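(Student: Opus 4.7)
The plan is to combine the non-tightness obtained in Lemma~\ref{l2} with the Gaussian equivalence/singularity dichotomy of \cite{Eag81} that was advertised in the introduction. The rough shape of the argument is: non-tightness rules out mutual contiguity, and the dichotomy then forces entire asymptotic separation along a subsequence.

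First I would extract from Lemma~\ref{l2} a concrete witness against contiguity. Non-tightness of $(L_\alpha^n)_{\alpha>0}$ under $(Q^{\alpha,n})_{\alpha>0}$ means that $\lim_{N\to\infty}\limsup_{\alpha\to\infty}Q^{\alpha,n}(L_\alpha^n>N)$ is not zero, so by a standard diagonal extraction there exist $\epsilon>0$, a sequence $\alpha_k\to\infty$, and constants $N_k\uparrow\infty$ with
\begin{equation*}
Q^{\alpha_k,n}(L_{\alpha_k}^n>N_k)\geq\epsilon\quad\text{for every }k.
\end{equation*}
Setting $A^k:=\{L_{\alpha_k}^n>N_k\}$ and using that $L_{\alpha_k}^n$ has expectation $1$ under $Q_{\frac{\mu\alpha_k}{\sigma}}^n$, Markov's inequality gives $Q_{\frac{\mu\alpha_k}{\sigma}}^n(A^k)\leq 1/N_k\to 0$. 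The sets $A^k$ therefore witness that $(Q^{\alpha_k,n})$ is not contiguous to $(Q_{\frac{\mu\alpha_k}{\sigma}}^n)$, and this conclusion persists along any further subsequence of $(\alpha_k)$, so mutual contiguity is impossible along any sub-subsequence.

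Second, I would invoke the Eagleson dichotomy. Since $Y_n$ is Gaussian under both measures (by the setup preceding Lemma~\ref{relent}), $Q^{\alpha,n}$ and $Q_{\frac{\mu\alpha}{\sigma}}^n$ are Gaussian laws on $\Rb^n$. The dichotomy of \cite{Eag81} asserts that a pair of sequences of Gaussian measures admits, along a subsequence, either mutual contiguity or entire asymptotic separation. Applied within the subsequence $(\alpha_k)$ of the previous step, the first alternative has just been ruled out, so one extracts a sub-subsequence along which entire asymptotic separation holds. By Definition~\ref{asymsep} this is exactly $(Q^{\alpha,n})_{\alpha>0}\vartriangle(Q_{\frac{\mu\alpha}{\sigma}}^n)_{\alpha>0}$.

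The main care needed is in the precise formulation of the dichotomy: one must use a subsequence version, i.e.\ ``every subsequence of Gaussian pairs contains a further subsequence falling into one of the two regimes'', rather than a single-sequence version. With that in hand the proof is short and essentially consists of feeding the sets $A^k$ of the first step into the dichotomy of the second. A minor point to be explicit about is that the sets $A^k$ built from non-tightness only satisfy $Q^{\alpha_k,n}(A^k)\geq\epsilon$, not $Q^{\alpha_k,n}(A^k)\to 1$; strengthening to $P$-probability tending to $1$ is precisely what the Gaussian dichotomy delivers for free once contiguity is excluded.
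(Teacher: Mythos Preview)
Your argument is correct and follows essentially the same route as the paper: non-tightness from Lemma~\ref{l2} rules out contiguity, and then the Gaussian dichotomy of \cite{Eag81} forces entire asymptotic separation. The only cosmetic difference is that the paper cites \cite[Lemma~V.1.6]{Jashi} for the step ``non-tightness $\Rightarrow$ non-contiguity'' whereas you spell it out directly via Markov's inequality, and you are a bit more explicit about the subsequence formulation of the dichotomy.
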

\begin{proof}
Since, by Lemma~\ref{l2}, $(L_\alpha^n)_{\alpha>0}$ is not tight
with respect to $(Q^{\alpha,n})_{\alpha>0}$ we apply \cite[Lemma
V.1.6]{Jashi} and deduce that, for each $n> 1$, $(Q^{\alpha
,n})_{\alpha}\ntriangleleft Q_{\frac{\mu\alpha}{\sigma}}^n$. The
dichotomy for sequences of Gaussian measures of \cite[Corollary 4]{Eag81}
says that two sequences of Gaussian measures on $\mathbb{R}^n$ are
either mutually contiguous or entirely separable.
So we conclude that, for each $n> 1$, $(Q^{\alpha,n})_{\alpha
>0}\vartriangle(Q_{\frac{\mu\alpha}{\sigma}}^n)_{\alpha>0}$.
\end{proof}
%
\begin{remark}\label{final}
From Remark \ref{rmk0}, when $\mu=0$, the same arguments lead to
the conclusion that Lemma \eqref{finiten} holds true for $n=1$.
\end{remark}
\begin{proof}[Proof of Theorem~\ref{t1}]
From Proposition \ref{pkk} (see also \cite[Proposition 4]{Kakra}), we
know that there is a SAA if and only if $(Q^{\alpha})_{\alpha
>0}\vartriangle(Q_{\frac{\mu\alpha}{\sigma}})_{\alpha>0}$.

Fix $n>1$. By Lemma \ref{finiten}, there exist a sequence $(\alpha
_\ell)_{\ell\geq1}$ converging to infinity and sets $A_\ell\in\Fs
_n$ such that
\[
\lim_{\ell\to\infty} Q^{\alpha_\ell}(A_\ell)=\lim
_{\ell\to
\infty}Q^{\alpha_\ell,n}(A_\ell)=0
\]
and
\[
\lim_{\ell\to\infty} Q_{\frac{\mu\alpha_\ell}{\sigma}}(A_\ell )=\lim
_{\ell\to\infty}Q_{\frac{\mu\alpha_\ell}{\sigma
}}^n(A_\ell)=1.
\]
The result follows.
\end{proof}

\section{Interpretation of the results in the restricted
markets}\label{S3}
Lemma~\ref{finiten} might suggest that, for each $n>1$ (or following
Remark \ref{final}, for each $n\geq1$ if $\mu=0$), there exists also
some kind of asymptotic arbitrage in the large financial market
consisting of the restrictions of the $\alpha$-mixed fractional
Black--Scholes models, $\alpha>0$, to the grid $E_n:=\{0,\frac
{1}{n},\ldots,\frac{n-1}{n},1\}$. However, we will show that this is impossible.

For simplicity, we only consider the case $n=1$ and $\mu=0$.
We also assume that $S^{H,\alpha}_0=1$. Thus, for each $\alpha>0$,
the corresponding market is
%
\begin{equation}
S^{H,\alpha}_t=\exp{ \biggl(\sigma \biggl(Z^H_t+
\frac{1}{\alpha} B_t \biggr)-\frac{\sigma^2}{2\alpha^2}t \biggr)},\quad
t=0,1.\label{2step}
\end{equation}
In this case all possible strategies are constants and hence the value
process $V^{\alpha}_1$ takes the following form
\[
V^{\alpha}_1=c_{\alpha}\bigl(S^{H,\alpha}_1-1
\bigr),
\]
where $c_{\alpha}\in\mathbb{R}$. Obviously, we cannot hope for
admissibility (boundedness from below), see the discussion in the
introduction of \cite{Mi}. But even if we do not require any
admissibility here, there is no way to choose a sequence of $\alpha
_\ell\to\infty$ and corresponding value processes $V^{\alpha_\ell
}$ such that the following hold: there exists $\beta>0$ and $\ep
_\ell\to0$ with
%
\begin{align}
\label{nafl} (i)\quad & P\bigl(V^{\alpha_\ell}_1>\beta\bigr)>
\beta, \text{ for all $\ell $},
\nonumber
\\
(ii)\quad & \lim_{\ell\to\infty}P\bigl(V^{\alpha_\ell}_1\geq-
\ep_\ell\bigr)=1.
\end{align}
This is not possible since $Z^{H}_1$ as well as $B_1$ are independent
$N(0,1)$ and hence are strictly positive as well as strictly negative,
with positive $P$-probability (and here neither letting $\alpha\to
\infty$ nor multiplying $S^{H,\alpha}_1-1$ by some constants, either
positive or negative, will be of any help: whenever there will be a
strictly positive part in the limit there will also be a strictly
negative part in the limit with a non-disappearing probability). Hence
there is no such thing as (\ref{nafl}) which, in our discrete time
$t=0,1$ situation, is the appropriate version
of an asymptotic arbitrage.

The reason behind this apparent contradiction is that in contrast to
the continuous time large financial market its discrete counterpart is
not complete.
Under the original measure $P$ (which induces $Q^{\alpha}$ on
$\mathcal{C}[0,1]$) we have that $Z^H_1\sim N(0,1)$ and $B_1\sim
N(0,1)$ and the two random variables are independent. We know that the
Wiener measure $Q_W$ is a martingale measure for $S^{\alpha}$
(understood on $\mathcal{C}[0,1]$) for all $\alpha$, hence
$Q_W|_{\mathcal{F}_1}$ is an equivalent martingale measure for (\ref
{2step}). We will now construct a different martingale measure for the
process (\ref{2step}) which is equivalent to $P$ on $(\varOmega,\mathcal{F})$.

Indeed, define a measure $\tilde{P}$ on $(\varOmega,\mathcal{F})$ as
follows: $\frac{d\tilde{P}}{dP}=g(X)$ where we have $X:=\exp(\sigma
Z^H_1)$ and $g(x)=e^{-x}\frac{1}{h(x)}$ where $h(x)=\frac{1}{\sqrt{2\pi
}\sigma x} \exp(-\frac{1}2(\frac{\ln(x)}{\sigma})^2)$ is the density
of a lognormal distribution, i.e., the density of the law of $X$ under
$P$. Obviously this measure change has the purpose to make the
distribution of $X$ exponential with parameter 1. Recall that the
measure $Q^{\alpha,1}$ is considered as a measure on $\Rb$ (the
measure induced by $M_1^{H,\alpha}:=\alpha Z_1^H+B_1$).

\begin{lemma}\label{newemm}
The measure $\tilde{P}$ satisfies:
\begin{enumerate}
\item\xch{$\tilde{P}\sim P$.}{$\tilde{P}\sim P$}
\item$E_{\tilde{P}}[S^{H,\alpha}_1]=1=S^{H,\alpha}_0$, which means
that $\tilde{P}$ is a martingale measure for (\ref{2step}), for each
$\alpha$.
\item Let $\tilde{Q}^{\alpha,1}$ be the measure that is induced by
$(M_1^{H,\alpha},\tilde{P})$ on $\Rb$, for each $\alpha>0$. Then
$(\tilde{Q}^{\alpha,1})_{\alpha>0}\triangleleft\triangleright
(Q^{\alpha,1})_{\alpha>0}$.
\end{enumerate}
\end{lemma}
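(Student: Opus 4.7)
The plan is to settle the three claims of Lemma~\ref{newemm} in order, reserving the real work for claim~(3).

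For claim~(1) I would observe that $X>0$ and $h(x)>0$ for every $x>0$, so $g(X)>0$ $P$-almost surely, while the normalization $E_P[g(X)]=\int_0^\infty g(x)h(x)\,dx=\int_0^\infty e^{-x}\,dx=1$ is immediate; strict positivity of the density then gives $\tilde P\sim P$. For claim~(2), the decisive observation is that $d\tilde P/dP=g(X)$ depends only on $X=\exp(\sigma Z^H_1)$, which is independent of $B_1$ under $P$. A short check then shows that under $\tilde P$ the variable $X$ has density $g(x)h(x)=e^{-x}$ on $(0,\infty)$, i.e.\ $X\sim\mathrm{Exp}(1)$, whereas $B_1$ retains its $N(0,1)$ law and is still independent of $X$. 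Using the factorisation $S^{H,\alpha}_1=X\cdot\exp\bigl(\tfrac{\sigma}{\alpha}B_1-\tfrac{\sigma^2}{2\alpha^2}\bigr)$ together with this independence then yields $E_{\tilde P}[S^{H,\alpha}_1]=E_{\tilde P}[X]\cdot 1=1=S^{H,\alpha}_0$.

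Claim~(3) is the main obstacle and the only place where a genuine argument is needed. My strategy is to derive mutual contiguity from uniform (in $\alpha$) bounds on the two relative entropies, and then invoke Le Cam's first lemma through the de~la~Vall\'ee--Poussin criterion. The data-processing inequality for relative entropy, applied to the measurable map $T=M_1^{H,\alpha}\colon\Omega\to\mathbb{R}$, gives
\begin{equation*}
H\bigl(\tilde Q^{\alpha,1}\big|Q^{\alpha,1}\bigr)\leq H(\tilde P|P)\quad\text{and}\quad H\bigl(Q^{\alpha,1}\big|\tilde Q^{\alpha,1}\bigr)\leq H(P|\tilde P),
\end{equation*}
with the right-hand sides independent of $\alpha$. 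I would then check that both $H(\tilde P|P)=E_{\tilde P}[\log g(X)]$ and $H(P|\tilde P)=-E_P[\log g(X)]$ are finite; after the substitution $z=(\ln x)/\sigma$ each expectation reduces to a finite linear combination of elementary moments --- namely $E_{\tilde P}[X]=1$ together with moments of $Z^H_1$ under $\tilde P$ (finite because $\log U$ has all moments for $U\sim\mathrm{Exp}(1)$) in the first case, and $E_P[e^{\sigma Z^H_1}]=e^{\sigma^2/2}$ together with the first two moments of a standard normal in the second.

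With both relative entropies uniformly bounded in $\alpha$, the de~la~Vall\'ee--Poussin criterion applied to the convex function $x\mapsto x\log x$ shows that $\{d\tilde Q^{\alpha,1}/dQ^{\alpha,1}\}$ is uniformly integrable under $Q^{\alpha,1}$ and $\{dQ^{\alpha,1}/d\tilde Q^{\alpha,1}\}$ is uniformly integrable under $\tilde Q^{\alpha,1}$. By Le Cam's first lemma, invoked exactly as in \cite[Lemma~V.1.6]{Jashi} in the proof of Lemma~\ref{finiten}, this is equivalent to the two contiguity relations $(\tilde Q^{\alpha,1})\triangleleft(Q^{\alpha,1})$ and $(Q^{\alpha,1})\triangleleft(\tilde Q^{\alpha,1})$, which together give the desired mutual contiguity.
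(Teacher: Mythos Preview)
Your proofs of (1) and (2) match the paper's argument essentially word for word, including the observation that $X\sim\mathrm{Exp}(1)$ under $\tilde P$ and the use of the independence of $Z_1^H$ and $B_1$ to factorise $E_{\tilde P}[S_1^{H,\alpha}]$.

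For (3), your argument via the data-processing inequality, finiteness of $H(\tilde P|P)$ and $H(P|\tilde P)$, and the de~la~Vall\'ee--Poussin/uniform-integrability route is correct, but it is considerably more work than necessary. The paper's proof of (3) is a one-liner that exploits the crucial feature of this setup: the measures $P$ and $\tilde P$ on $(\Omega,\mathcal F)$ are \emph{fixed} and do not depend on $\alpha$; only the map $M_1^{H,\alpha}$ varies. Since $P\sim\tilde P$, absolute continuity in both directions gives the $\varepsilon$--$\delta$ property uniformly over all events, so for any family $D^\alpha\in\mathcal F$ one has $P(D^\alpha)\to 0$ if and only if $\tilde P(D^\alpha)\to 0$. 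Applying this with $D^\alpha=\{M_1^{H,\alpha}\in A^\alpha\}$ yields $Q^{\alpha,1}(A^\alpha)\to 0\Leftrightarrow\tilde Q^{\alpha,1}(A^\alpha)\to 0$ directly, which is mutual contiguity. Your entropy approach works and would be the right tool if $P$ and $\tilde P$ themselves depended on $\alpha$, but here it obscures the simple structural reason why the result holds.
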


\begin{proof}
To prove (1) observe that $g(x)>0$ for all $0<x<\infty$ and
$0<X<\infty$ $P$-a.s. and hence $\frac{d\tilde{P}^{\alpha
}}{dP}=g(X)>0$ $P$-a.s. Moreover
\[
E_P\bigl[g(X)\bigr]=\int_0^{\infty}g(x)h(x)dx=
\int_0^{\infty}e^{-x}dx=1,
\]
and so $\tilde{P}$ is a probability equivalent to $P$.

For (2) we see that
\begin{align}
E_{\tilde{P}}\bigl[S^{H,\alpha}_1\bigr] &=E_P
\biggl[g \bigl(e^{\sigma
Z^{H}_1} \bigr)\exp{ \biggl(\sigma \biggl(Z^H_1+
\frac{1}{\alpha} B_1 \biggr)-\frac{\sigma^2}{2\alpha^2} \biggr)} \biggr]
\nonumber
\\
&=E_P \bigl[g \bigl(e^{\sigma Z^{H}_1} \bigr)\exp{ \bigl(\sigma
Z^H_1 \bigr)} \bigr]E_P \biggl[\exp{
\biggl(\frac{\sigma}{\alpha} B_1-\frac{\sigma^2}{2\alpha^2} \biggr)} \biggr]
\nonumber
\\
&=E_P\bigl[g(X)X\bigr],
\nonumber
\end{align}
where we used the independence of $Z^{H}_1$ and $B_1$ under $P$.
Finally we have that
\[
E_P\bigl[g(X)X\bigr]=\int_0^{\infty}g(x)xh(x)dx=
\int_0^{\infty}xe^{-x}dx=1,
\]
proving (2).

For (3) note that, for each $A\in\mathcal{B}(\Rb)$, we have
$Q^{\alpha,1}(A)=P(M_1^{H,\alpha}\in A)$ and $\tilde{Q}^{\alpha
,1}(A)=\tilde{P}(M^{H,\alpha}_1\in A)$. Thus, using that $\tilde
{P}\sim P$, we infer, for a family of sets $A^{\alpha}$, that
$Q^{\alpha,1}(A^{\alpha})=P(D^{\alpha})\to0$, for $\alpha\to
\infty$, if and only if $\tilde{Q}^{\alpha,1}(A^{\alpha})=\tilde
{P}(D^{\alpha})\to0$, where $D^{\alpha}=\{M_1^{H,\alpha}\in
A^{\alpha}\}\in\mathcal{F}_1$. The result follows.
\end{proof}

In conclusion, Lemma~\ref{newemm} shows that there exists a family of
equivalent martingale measures for the model (\ref{2step}) with good
properties, in this case with the property of mutual contiguity.\querymark{Q4} And this fact is reflected
by the impossibility to find asymptotic arbitrage opportunities for the
family of models (\ref{2step}), $\alpha>0$.

\begin{appendix}

\section{Relative entropy}\label{A1}
In this section, we recall the concept of relative entropy and some
equivalent characterization. A more detailed presentation of the topic
can be found in \cite{Hihi}.
%
\begin{definition}\label{a1.1}
Let $Q_1$ and $Q_2$ be probability measures on a measurable space
$(\varOmega,\Fs)$ and let $P=\{F_i: i=1,\ldots,n\}$ be a finite
partition of $\varOmega$, i.e. $\varOmega=\cup_{i=1}^nF_i$ and $F_i$ are
pairwise disjoint. The entropy of the measure $Q_1$ relative to $Q_2$
is the quantity
\[
H(Q_1|Q_2)=\sup_{\Ps}\sum
_{j=1}^nQ_1(F_j)\ln
\biggl(\frac
{Q_1(F_j)}{Q_2(F_j)} \biggr),
\]
where $\Ps$ is the class of all possible finite partitions $P$ of
$\varOmega$. In the above formula, we assume that $0\ln0=0$ and $\ln
0=-\infty$.
\end{definition}

\begin{lemma}[{\cite[Lemma 6.1]{Hihi}}]\label{a1.2}
If a probability measure $Q_1$ is absolutely continuous w.r.t. another
probability measure $Q_2$, then the relative entropy $H(Q_1|Q_2)$ is
related to the Radon--Nikodym derivative $\phi=\frac{dQ_1}{dQ_2}$ as follows:
\[
H(Q_1|Q_2)=E_{Q_1}\bigl[\ln(\phi)
\bigr]=E_{Q_2}\bigl[\phi\ln(\phi)\bigr].
\]
\end{lemma}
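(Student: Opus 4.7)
The plan is to separate the chain of equalities $H(Q_1|Q_2) = E_{Q_1}[\ln \phi] = E_{Q_2}[\phi \ln \phi]$ into two steps. The second equality $E_{Q_1}[\ln \phi] = E_{Q_2}[\phi \ln \phi]$ follows immediately from the abstract change-of-variables formula $dQ_1 = \phi\,dQ_2$, which is available because $Q_1 \ll Q_2$. All of the substance therefore lies in identifying the supremum over finite partitions from Definition~\ref{a1.1} with the $Q_2$-expectation of $\phi \ln \phi$.

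Fix a finite partition $P = \{F_1,\ldots,F_n\}$ of $\Omega$. Setting
\begin{equation*}
\phi_P := \sum_{j=1}^n \frac{Q_1(F_j)}{Q_2(F_j)}\,\mathbf{1}_{F_j},
\end{equation*}
with the convention $0/0 = 0$, one recognizes $\phi_P$ as the $Q_2$-conditional expectation of $\phi$ given $\sigma(P)$, equivalently the Radon--Nikodym derivative of $Q_1|_{\sigma(P)}$ with respect to $Q_2|_{\sigma(P)}$. A direct rewriting of the partition sum gives
\begin{equation*}
\sum_{j=1}^n Q_1(F_j) \ln\frac{Q_1(F_j)}{Q_2(F_j)} = E_{Q_2}[\phi_P \ln \phi_P].
\end{equation*}
The upper bound $H(Q_1|Q_2) \leq E_{Q_2}[\phi \ln \phi]$ then follows from the conditional Jensen inequality applied to the convex function $x \mapsto x \ln x$ on $[0,\infty)$: since $\phi_P = E_{Q_2}[\phi\mid\sigma(P)]$,
\begin{equation*}
E_{Q_2}[\phi_P \ln \phi_P] \leq E_{Q_2}\bigl[E_{Q_2}[\phi \ln \phi \mid \sigma(P)]\bigr] = E_{Q_2}[\phi \ln \phi],
\end{equation*}
and taking the supremum over $P$ yields the inequality.

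For the matching lower bound I would select a sequence of finite partitions $P_k$ approximating $\sigma(\phi)$, for instance the partition generated by the dyadic level sets $\{j2^{-k} \leq \phi < (j+1)2^{-k}\}$ for $0 \leq j < k 2^k$ together with $\{\phi \geq k\}$. Then $\sigma(P_k) \uparrow \sigma(\phi)$, and by L\'evy's upward convergence theorem $\phi_{P_k} = E_{Q_2}[\phi \mid \sigma(P_k)] \to \phi$ both $Q_2$-almost surely and in $L^1(Q_2)$. The main obstacle is passing to the limit inside $E_{Q_2}[\phi_{P_k} \ln \phi_{P_k}]$, since $x \ln x$ is unbounded above and negative on $(0,1)$. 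This is handled by the elementary bound $x \ln x \geq -1/e$ for $x \geq 0$: applying Fatou's lemma to the nonnegative sequence $\phi_{P_k} \ln \phi_{P_k} + 1/e$ gives
\begin{equation*}
\liminf_{k\to\infty} E_{Q_2}[\phi_{P_k} \ln \phi_{P_k}] \geq E_{Q_2}[\phi \ln \phi],
\end{equation*}
which together with the upper bound establishes the identity and simultaneously covers the degenerate case $E_{Q_2}[\phi \ln \phi] = +\infty$.
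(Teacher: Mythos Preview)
Your argument is correct. The upper bound via conditional Jensen for the convex map $x\mapsto x\ln x$ is the standard step, and the lower bound via a filtration of dyadic level sets of $\phi$, L\'evy's upward theorem, and Fatou's lemma applied to $\phi_{P_k}\ln\phi_{P_k}+1/e\geq 0$ is clean and handles the case $E_{Q_2}[\phi\ln\phi]=+\infty$ without extra work.

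As for comparison with the paper: there is nothing to compare against. The paper does not prove Lemma~\ref{a1.2}; it merely quotes it from \cite[Lemma~6.1]{Hihi} as a background fact in Appendix~\ref{A1}. Your proposal therefore supplies a self-contained proof that the paper deliberately omits. If you wanted to shorten it to match the paper's level of detail, you could simply cite the external reference; conversely, if the goal is a self-contained write-up, your argument is complete as it stands.
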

\end{appendix}

\begin{acknowledgement}
We thank an anonymous referee whose comments and suggestions contributed to
the quality of this version of the paper.
\end{acknowledgement}

\begin{funding}
The third author gratefully
acknowledges financial support from the
\gsponsor[id=GS1,sponsor-id=501100002428]{Austrian Science
Fund} (FWF): \gnumber[refid=GS1]{J3453-N25}.
\end{funding}

\end{document}